\documentclass[12pt]{amsart}
\usepackage{amsfonts}
\usepackage{amssymb}
\usepackage{bm}

 \newtheorem{theorem}{Theorem}[section]
 
 \newtheorem{conjecture}[theorem]{Conjecture}
 \newtheorem{lemma}[theorem]{Lemma}
 
 \theoremstyle{definition}

 \newtheorem{remark}[theorem]{Remark}

\numberwithin{equation}{section}

\renewcommand{\b}{\hm\cdot}

\begin{document}
\title[The Erd\H{o}s-Ginzburg-Ziv theorem constant of finite groups]
{The Erd\H{o}s-Ginzburg-Ziv theorem constant of finite groups}

\author{Yang Zhao}
\address{School of Mathematical Sciences\\ Tiangong University\\
Tianjin 300387, P.R.
China}
\email{Zhyang202412@163.com}

\author[G. Wang]{Guoqing Wang*}
\address{School of Mathematical Sciences\\ Tiangong University\\
Tianjin 300387, P.R.
China}
\email{gqwang1979@aliyun.com}\thanks{*Corresponding author}

\begin{abstract} Let $G$ be a multiplicatively written finite group of order $n$. The Erd\H{o}s-Ginzburg-Ziv Theorem constant of the group $G$, denoted $\mathsf E(G)$, is defined as the smallest positive integer $\ell$ with the following property: for any given sequence $(g_1,\ldots,g_{\ell})$ over $G$, there exist $n$ distinct integers  $i_1,\ldots,i_n\in \{1,\ldots,\ell\}$ such that the product of
$g_{i_1},\ldots,g_{i_n}$, in some order, is the identity element of $G$. The Erd\H{o}s-Ginzburg-Ziv Theorem constant originates from the celebrated additive theorem proved by Erd\H{o}s, Ginzburg and Ziv in 1961, which amounts to proving $\mathsf E(G)\leq 2|G|-1$ holds in case that $G$ is abelian. It is also well-known that $\mathsf E(G)=2|G|-1$ holds for all finite cyclic groups.
In 2010, Gao and Li [J. Pure Appl. Algebra] conjectured that $\mathsf E(G)\leq \frac{3|G|}{2}$ for every finite non-cyclic group $G$. In this paper, we confirm the conjecture for all non-cyclic groups $G$ whose order is not divisible by four, and characterize the groups achieving the equality $\mathsf E(G)=\frac{3|G|}{2}$ as those with a cyclic subgroup of index two.
\end{abstract}

\date{}

\maketitle

\noindent {\footnotesize {\it Keywords}: Erd\H{o}s-Ginzburg-Ziv theorem; Davenport constant; Product-one sequence; Metacyclic group} \\
\noindent {\footnotesize {\it 2020 Mathematics Subject Classifications}: Primary 11B75; Secondary 11P70}

\section{Introduction and main results}

Let $G$ be a finite group written multiplicatively with the operation $*$. The identity element $1_G$ of $G$ is denoted by $1$ for brevity, and this usage will not be ambiguous in context.
Let $S=g_1\bm\cdot \ldots\bm\cdot g_{\ell}$ be a sequence over $G$ with length $\ell$. We say $S$ is a {\sl product-one sequence} provided that the product of all its elements, in some order, equals $1$; that is,
there exists a permutation $\tau$ of $\{1,2,\ldots,\ell\}$ such that $g_{\tau (1)}*\cdots *g_{\tau (\ell)}=1$.
Recall that the Erd\H{o}s-Ginzburg-Ziv Theorem constant $\mathsf E(G)$ of $G$ is defined as the smallest positive integer $\ell$ such that every sequence of length $\ell$ over $G$ contains a product-one subsequence of length $|G|$. This invariant $\mathsf E(\cdot)$ originates from the celebrated additive theorem proved by Erd\H{o}s, Ginzburg and Ziv \cite{EGZ1961} in 1961, known as {\sl Erd\H{o}s-Ginzburg-Ziv Theorem}, which asserts that in case that $G$ is a finite {\sl abelian} group, every sequence over $G$ of length $2|G|-1$ must have a {\sl product-one sequence} of length $|G|$. In other words, the Erd\H{o}s-Ginzburg-Ziv amounts to proving that $\mathsf E(G)\leq 2|G|-1$ when $G$ is a finite abelian group.
One thing worth noting is that the Erd\H{o}s-Ginzburg-Ziv Theorem, together with the Davenport constant, forms a cornerstone of Zero-Sum Theory (see \cite{GG2006} for a survey), a subfield of Combinatorial Number Theory. The Davenport constant focuses on the minimal length of a sequence guaranteeing a nonempty product-one subsequence. In contrast,  the Erd\H{o}s-Ginzburg-Ziv theorem  constant specifically targets product-one subsequences of length exactly equal to the order of the group, which reflects a striking combinatorial rigidity of finite abelian groups.
Since then, numerous results in this field have been obtained, and a wealth of applications and connections to other fields--especially to Factorization Theory in Algebra \cite{GeK,GeR}, have been established.

Although the primary focus of zero-sum theory was initially on finite abelian groups, research in this area
has never been restricted to the abelian setting alone. We mention
here some results concerning bounds on $\mathsf E(G)$, and we refer the readers to \cite{ABR,Bass2007,BLMR,  GL2008,GJM,HZ2019,ORZZ,QL,ZG2005} for further results on $\mathsf E(G)$ for certain specific classes of groups. In 1976, Olson \cite{O} proved that $\mathsf E(G)\leq 2|G|-1$ for all finite groups. In 1984, Yuster and Peterson \cite{YP1984} showed that $\mathsf E(G)\leq 2|G|-2$ when $G$ is a non-cyclic finite solvable group. Later, Yuster \cite{YP1988} improved this bound to $\mathsf E(G)\leq 2|G|-r$ provided that $|G|\geq 600((r-1)!)^2$.
Building on these and other results, Gao and Li \cite{GL2010}   proposed the following conjecture in 2010 concerning the upper bound of the Erd\H{o}s-Ginzburg-Ziv Theorem constant.

\begin{conjecture}\label{EGZconjecture} \cite[Conjecture 3]{GL2010}
Let $G$ be a finite non-cyclic group. Then $\mathsf E(G)\leq \frac{3|G|}{2}$.
\end{conjecture}

Gao, Li and Qu \cite{GLQ} in 2023 proved Conjecture \ref{EGZconjecture} holds for all groups of odd order. Godara, Joshi and Mazumdar \cite{GJM} established that Conjecture \ref{EGZconjecture} holds for group of order $2p^{t}$, where $p$ is a prime and the subgroup of $G$ of order $p^{t}$ is abelian.
In this paper, we extend their results on this upper bound conjecture and resolve the corrsponding inverse problem. The main result of this paper is as follows.

\begin{theorem}\label{maintheorem} Let $G$ be a finite non-cyclic group such that $4\nmid |G|$.
Then $\mathsf E(G)\leq \frac{3|G|}{2}$, and moreover, the equality $\mathsf E(G)=\frac{3|G|}{2}$ holds if and only if $G$ has a cyclic subgroup of index two.
\end{theorem}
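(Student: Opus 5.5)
Since $4\nmid |G|$, either $|G|$ is odd or $|G|=2m$ with $m$ odd. The odd case is already covered by Gao, Li and Qu \cite{GLQ}: $\mathsf E(G)\le \frac{3|G|}{2}$ there, and equality cannot hold because $\frac{3|G|}{2}$ is not an integer, while a group of odd order has no subgroup of index two. So the main work is the case $|G|=2m$ with $m$ odd. Here a Sylow $2$-subgroup has order $2$, so Burnside's transfer theorem gives a normal subgroup $N$ of order $m$ with $G=N\rtimes C_2$.

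For the upper bound I would use the standard inductive lemma. If $N\trianglelefteq G$, then
\[
\mathsf E(G)\le (\mathsf E(G/N)-1)|N|+\mathsf E(N),
\]
obtained by repeatedly extracting subsequences of length $|G/N|$ whose images are product-one in $G/N$. Applying this with $G/N\cong C_2$ and $\mathsf E(C_2)=3$ gives
\[
\mathsf E(G)\le 2m+\mathsf E(N).
\]
That is not enough by itself. Instead I would take the normal subgroup of order $2$ side: pick a sequence $S$ of length $3m$. Let $S_0$ consist of the terms lying in $N$ and $S_1$ of the terms outside $N$. If $N$ is non-cyclic, then by \cite{GLQ} $\mathsf E(N)\le 3m/2$, and the bound follows from pairing arguments.

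The delicate case is $N$ cyclic, $N=\langle a\rangle\cong C_m$, so $G=\langle a,b: a^m=1,b^2=a^{0},bab^{-1}=a^r\rangle$ with $r^2\equiv1\pmod m$ (a metacyclic group). Since $G$ is non-cyclic, $r\ne 1$. For this case I would argue directly. Terms in $S_1$ are of the form $ba^{x}$, and a product $(ba^x)(ba^y)=a^{ry+x}$, up to the sign conventions. Pairing terms of $S_1$ two at a time converts them into elements of $N$, and the leftover terms combine with $S_0$. I would then apply the Erd\H{o}s--Ginzburg--Ziv theorem, or its weighted or Davenport-type refinements for $C_m$, to the resulting sequence in $N$. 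The task is to produce $m$ elements of $N$ that together realise a product-one subsequence of total length $2m$. The key counting is this. If $|S_1|\ge$ roughly $m$, I form $\lfloor |S_1|/2\rfloor$ pairs, choosing the pairing freely so that it avoids degenerate sums. If $|S_1|$ is small, $S_0$ has length at least about $2m-1$ and EGZ in $C_m$ applied twice gives two disjoint zero-sum subsequences of length $m$. The \textbf{main obstacle} is the intermediate range of $|S_1|$, where mixed subsequences are required. There I would use the flexibility of ordering: a product containing an even number $2k$ of $b$-terms can realise many elements of $N$ by permuting, since conjugation by $b$ twists exponents by $r$. I would then try to show that the set of attainable products is a large subset, or an arithmetic progression, of $C_m$, via Cauchy--Davenport/Kneser in $C_m$. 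When $m$ is not prime, this needs a separate treatment of the subgroups of $C_m$ that are stabilised under $x\mapsto rx$.

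For the equality characterization, the lower bound comes from an explicit construction when $G$ has a cyclic subgroup $N$ of index two. Take $S=1^{[m-1]}\cdot a^{[m-1]}\cdot b^{[m]}\cdots$, suitably adjusted so that its length is $3m-1$ and it has no product-one subsequence of length $2m$; this is the analogue of the known dihedral extremal example from \cite{GL2010}. This gives $\mathsf E(G)\ge 3m$. Conversely, if $G$ has no cyclic subgroup of index two, then the subgroup $N$ of index two is non-cyclic. The argument above then yields the strict inequality $\mathsf E(G)\le 2m+$(something less than $m$), using $\mathsf E(N)<3m/2$ from \cite{GLQ} together with a sharpened form of the inductive lemma that exploits the fact that $G/N\cong C_2$ leaves slack. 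I expect this sharpening to also require care, but to be less difficult than the cyclic-$N$ range analysis.
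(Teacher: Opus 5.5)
Your skeleton (odd order from Lemma \ref{GLQ}, then $|G|=2m$ with a normal subgroup $N$ of index two, split by whether $N$ is cyclic) matches the paper's. In the non-cyclic branch, however, there is a genuine gap, and a misstatement hides where it lies.

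First, your inductive lemma has the roles of $N$ and $G/N$ swapped. Extracting pairs whose images are product-one in $G/N\cong C_2$ gives $\mathsf E(G)\le(\mathsf E(N)-1)|G/N|+\mathsf E(G/N)=2\mathsf E(N)+1$, not $2m+\mathsf E(N)$. With the correct form, no sharpening is needed in the generic case. If $N$ is non-cyclic and $N\not\cong C_3\times C_3$, Lemma \ref{GLQ} gives $\mathsf E(N)\le\frac{3m-3}{2}$, hence $\mathsf E(G)\le 3m-2<\frac{3|G|}{2}$ at once.

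Second, and this is the real gap, the bound $\mathsf E(N)<\frac{3m}{2}$ you invoke gives no strictness when $N\cong C_3\times C_3$, where $\mathsf E(N)=13=\frac{3\cdot 9-1}{2}$. A sequence of length $26$ over $G$ guarantees only $12$ disjoint pairs with products in $N$, one short of $\mathsf E(N)$. So pairing yields only $\mathsf E(G)\le 27=\frac{3|G|}{2}$, and there is no slack in $G/N$ to exploit. Your proposal never isolates this case, yet it is exactly where the paper's new work lies. The groups involved are the order-$18$ groups $C_3\times C_6$, $S_3\times C_3$ and $(C_3\times C_3)\rtimes_{-1}C_2$. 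The first is handled by $\mathsf E=|G|+\mathsf d(G)=25$ for abelian groups. For the other two, the paper proves $\mathsf E(G)\le 26$ (Lemma \ref{S3C3}) as follows:
\begin{itemize}
\item The $12$ pair-products have no product-one subsequence of length $9$.
\item By the inverse theorem for $C_3\times C_3$ (Lemma \ref{structure}), they therefore have the rigid form $\alpha^{[8]}\bm\cdot\beta^{[2]}\bm\cdot\mathcal A$ or $\alpha^{[5]}\bm\cdot\beta^{[5]}\bm\cdot\mathcal A$.
\item This form must survive every re-pairing and every reordering inside a pair, which forces many terms to coincide and to commute.
\item A product-one subsequence of length $18$ is then built by hand.
\end{itemize}
No general ``sharpened inductive lemma'' replaces this, so your expectation that this step is easier than the cyclic case is mistaken.

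In the cyclic branch $N\cong C_m$, what you give is a plan, not a proof. The intermediate range of $|S_1|$ is explicitly left open, and the Kneser-type step with $r$-stable subgroups is not carried out. This case is a known theorem, $\mathsf E(C_m\rtimes C_2)=\frac{3|G|}{2}$ for metacyclic groups of order $2m$ (Lemma \ref{ZQZk}), which the paper simply cites; it supplies both the upper bound and the equality.

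If you want the lower bound by hand, your sequence fails as written. For dihedral $G$, $1^{[m-1]}\bm\cdot a^{[m-1]}\bm\cdot b^{[m]}$ contains $1^{[m-1]}\bm\cdot a^{[2]}\bm\cdot b^{[m-1]}$, of length $2m$, whose product in the order $a*b*a*b*\cdots$ equals $1$. Use $1^{[2m-1]}\bm\cdot a^{[m-1]}\bm\cdot b$ instead. Equivalently, $a^{[m-1]}\bm\cdot b$ is product-one free, so $\mathsf d(G)\ge m$, and Lemma \ref{lowerbound} gives $\mathsf E(G)\ge 3m$.
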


\section{Notation and Preliminaries}

For real numbers $a,b\in \mathbb{R}$, we set $[a,b ]=\{x\in \mathbb{Z}: a\leq x\leq b\}$.
Let $G$ be a finite group written multiplicatively, with operation $*$.
For any subsets $A$ of $G$, we denote $\langle A \rangle $ to be the subgroup  of $G$ generated by $A$.
For any integer $n\geq 2$, we denote by $S_n$ the symmetric group on $n$
letters
 (i.e., the group of permutations of $\{1,\ldots,n\}$) and by $C_n$ the cyclic group of order $n$.

The notation and terminology for sequences are consistent with \cite{GG2013}, \cite[Chapter 5]{GeK} and \cite{Gr}.  By a {\sl sequence} over a group $G$, we mean a finite multiset (i.e., a finite, unordered collection allowing repeated elements) over $G$.  We view such sequences as elements of the free abelian monoid $\mathcal{F}(G)$. We denote multiplication in $\mathcal{F}(G)$ by the bold symbol $\bm\cdot$ (distinct from the group multiplication $*$ of $G$), and we use brackets for all exponentiation in $\mathcal{F}(G)$.
A sequence $S \in \mathcal F(G)$ can be written as $S= g_1  \bm \cdot g_2 \bm \cdot \ldots \bm\cdot g_{\ell},$ where $|S|= \ell$ is the {\it length} of $S$. For any element $g \in G$, let $\mathsf v_g(S) = |\{ i\in [1, \ell] : g_i =g \}|\,$ denote the {\it multiplicity} of $g$ in $S$. Let $\mathsf h(S)=\max\limits_{g\in G} \mathsf v_g(S)$ be the {\it height} (maximum multiplicity) of $S$. A sequence $T \in \mathcal F(G)$ is called a {\it subsequence} of $S$, denoted by $T \mid S$, if  $\mathsf v_g(T) \le \mathsf v_g(S)$ for all $g\in G$.
Denote by $S \bm\cdot T^ {[-1]}$  the subsequence of $S$ obtained by removing the terms of $T$ from $S$.  If $S_1, S_2 \in \mathcal F(G)$, then $S_1 \bm\cdot S_2 \in \mathcal F(G)$ denotes the sequence satisfying that $\mathsf v_g(S_1 \bm\cdot S_2) = \mathsf v_g(S_1 ) + \mathsf v_g( S_2)$ for all $g \in G$. For convenience, we  write
\begin{center}
 $g^{[k]} = \underbrace{g \bm\cdot \ldots \bm\cdot g}_{k} \in \mathcal F(G)\quad$
\end{center}
for any $g \in G$ and any integer $k\geq  0$.
By $$g*S=\mathop{\bullet}\limits_{a\mid S} (g*a)$$ we mean the sequence obtained from $S$ by left-translating all terms of $S$ by $g$. For any subset $X$ of $G$, we use $\mathcal{F}(X)$ to denote the submonoid of $\mathcal{F}(G)$ generated by $X$. We write $S\in \mathcal{F}(X)$ to mean that all terms of the sequence $S$ belong to the set $X$.

Let $\varphi:G\rightarrow G'$ be a group homomorphism. Then $\varphi$ can be extended to a monoid homomorphism $\Phi:\mathcal{F}(G)\rightarrow \mathcal{F}(G')$, given by $\Phi:S\mapsto \mathop{\bullet}\limits_{g\mid S} \varphi(g)$. For convenience, we shall still use $\varphi$ to denote the extended monoid homomorphism $\Phi$. Hence,  $\varphi(S)$ is a sequence over $G'$ with length $|\varphi(S)|=|S|$.
For the sequence $S= g_1 \b g_2 \bm \cdot \ldots \bm\cdot g_{\ell} \in \mathcal F(G)$, we denote $$\pi (S) = \{g_{\tau(1)}*\cdots *g_{\tau(\ell)}: \tau \mbox{ is a permutation of } [1, \ell] \}$$ to be the {\it set of products} of $S$,  and let $\Pi_n(S) = \bigcup _{T\mid S,\ |T| = n}\pi(T)$ be the {\it set of all $n$-products} of $S$, and let $\Pi(S) = \bigcup_{1 \le n \le \ell}\Pi_n(S)$ be the {\it set of all subsequence products} of $S$.
A sequence $S\in \mathcal{F}(G)$ is called
\begin{itemize}
\item[$\bullet$]  {\it product-one} if $1 \in \pi(S)$;
\item[$\bullet$] {\it product-one free} if $1\not\in \Pi(S)$;
\item[$\bullet$] {\it $|G|$-product-one} if $S$ is a product-one sequence of length $|G|$.
\end{itemize}
Note that by convention, the empty sequence is regarded as a product-one sequence.
The {\sl small Davenport constant} of $G$, denoted $\mathsf d(G)$, is the maximal length of a product-one free sequence over $G$, i.e., the largest integer $\ell$ such that there exists a sequence $S$ over $G$ with $1\notin \Pi(S)$.

\section{Proof of Theorem \ref{maintheorem}}

To prove Theorem\ref{maintheorem}, we shall need the following lemmas.

\begin{lemma}\cite[Corollary 1.3]{QLT}\label{QLT} Let $G$ be a finite non-cyclic group and $p$ the smallest prime divisor of $|G|$. Then $\mathsf d(G)\leq \frac{|G|}{p}+p-2$.
\end{lemma}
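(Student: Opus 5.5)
The statement is quoted from \cite[Corollary 1.3]{QLT}, so the paper may simply cite it. If I had to reprove it, I would use a growth argument on ordered subsequence products, exploiting that a non-cyclic group has no element of order larger than $|G|/p$.

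\emph{Setup.} Let $S=g_1\bm\cdot\ldots\bm\cdot g_\ell$ be product-one free of length $\ell=\mathsf d(G)$. For each $k$ set
\[
B_k=\{1\}\cup\{g_{i_1}*\cdots*g_{i_r}: 1\le i_1<\cdots<i_r\le k\}.
\]
Then $B_{k+1}=B_k\cup (B_k*g_{k+1})$.

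\emph{Two observations.}
\begin{itemize}
\item Product-one freeness forces $|B_{k+1}|\ge |B_k|+1$. Otherwise $B_k*g_{k+1}\subseteq B_k$ would give $B_k*\langle g_{k+1}\rangle=B_k$. Then $1*g_{k+1}^{-1}\in B_k$, so $g_{k+1}^{-1}$ is an ordered product from $g_1,\dots,g_k$, and appending $g_{k+1}$ gives a product-one subsequence. The same reasoning, applied along a chain, shows that $B_k$ can never be a union of left cosets of $\langle g_{k+1}\rangle$.
\item Every term of $S$ has order at most $|G|/p$, since $G$ is non-cyclic and $|G|/\mathrm{ord}(g)$ is a divisor of $|G|$ exceeding $1$, hence at least $p$.
\end{itemize}

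\emph{Step 1.} Order $S$ cleverly. Put first the terms equal to a fixed element $g$ of maximal multiplicity $\mathsf v_g(S)$, and note $\mathsf v_g(S)\le \mathrm{ord}(g)-1\le |G|/p-1$. These first terms give $B_k=\{1,g,\dots,g^{k}\}$, so each contributes exactly one new element.

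\emph{Step 2.} Show that every later term $h$ outside $\langle g\rangle$ adds many elements. Compare $|B_k*h\setminus B_k|$ with the fact that $B_k$ contains a long arithmetic progression $\{1,g,\dots,g^{t}\}$. If $h\notin\langle g\rangle$, the translate $\{h,gh,\dots\}$ tends to leave $B_k$, and the aim is a gain of at least $p$ new elements, or at least $2$, per such term. A coset count then gives
\[
|G|\ \ge\ |B_\ell|\ \ge\ 1+\mathsf v_g(S)+(\text{gain from the remaining terms}).
\]
The target is to rearrange this into $\ell\le |G|/p+p-2$.

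\emph{Main obstacle.} The central difficulty is Step 2: a quantitative Kneser/Olson-type growth statement in a nonabelian group, namely that $|B\cup Bh|-|B|$ is large unless $B$ is nearly a union of cosets of $\langle h\rangle$. Near-stabilization must then be ruled out using product-one freeness together with $\mathrm{ord}(h)\le |G|/p$.

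\emph{Fallback.} If a direct growth bound is too weak, I would induct on $|G|$ via a maximal subgroup $H$. Repeatedly extract short product-one subsequences of $S$ living in cosets, with lengths controlled by $\mathsf d(G/N)$ for a normal $N$ when $G$ is solvable, and then apply the inductive hypothesis inside $H$. The extremal term $p-2$ should come from the case $G\cong C_{|G|/p}\rtimes C_p$, which I would check by hand, to confirm the bound is sharp and to guide the case split.
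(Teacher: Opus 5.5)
The paper gives no proof of this lemma; it is quoted from the literature, so citing it is all the paper does. As a proof, however, your proposal has a genuine gap: everything nontrivial is deferred to Step~2, which you leave open.

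Observation~1 only yields $|B_{k+1}|\ge |B_k|+1$, i.e.\ the trivial bound $\mathsf d(G)\le |G|-1$. Upgrading ``$B_k$ is not a union of left cosets of $\langle g_{k+1}\rangle$'' to a \emph{large} increment is exactly the missing lemma. A single step $B\mapsto B\cup (B*h)$ can genuinely add very few elements: a term $g^2$ placed right after the $g$-block adds exactly two, whatever $v=\mathsf v_g(S)$ is. Later terms in $\langle g\rangle\setminus\{g\}$ are not addressed at all. Only the first term $h\notin\langle g\rangle$ is easy: then $B_k\subseteq\langle g\rangle$ and $B_k*h\subseteq\langle g\rangle *h$ is disjoint from it, giving at least $v+1$ new elements. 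After that, $B_k$ meets several cosets and nothing in the proposal controls the increments. No off-the-shelf Kneser-type theorem does this for the order-dependent sets $B_k=\{1,g_1\}*\cdots*\{1,g_k\}$ in a nonabelian group.

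Moreover, the increment you aim for (``at least $p$, or at least $2$, per term'') would not suffice even if proved. From $|G|\ge 1+v+p(\ell-v)$ you only get $\ell\le v+\frac{|G|-1-v}{p}$, and $v$ can be as large as $\frac{|G|}{p}-1$. For instance, $S=e_2^{[p^2-1]}\bm\cdot e_1^{[p-1]}$ over $C_p\times C_{p^2}$ (with $e_1,e_2$ independent of orders $p,p^2$) is product-one free of the extremal length $p^2+p-2$. For $v=\frac{|G|}{p}-1$ the inequality gives only $\ell\le \frac{|G|}{p}-1+\frac{(p-1)|G|}{p^2}$. This exceeds $\frac{|G|}{p}+p-2$ by $(p-1)\bigl(\frac{|G|}{p^2}-1\bigr)>0$ whenever $|G|>p^2$, and a gain of $2$ per term is weaker still. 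In the example each copy of $e_1$ adds a full coset of $\langle e_2\rangle$, i.e.\ $v+1$ new elements. So an argument along your lines must produce increments that grow with $|B_k|$ (a genuine product-set lower bound), not a constant.

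The fallback does not close the gap either. It is only a sketch, and it is confined to solvable groups while the lemma is stated for all non-cyclic $G$. Extracting short subsequences with product in a normal subgroup $N$ only gives the multiplicative estimate $\mathsf d(G)+1\le(\mathsf d(N)+1)(\mathsf d(G/N)+1)$, which is far too weak for an additive bound like $\frac{|G|}{p}+p-2$.
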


\begin{lemma}\cite[Lemma 4]{ZG2005}\label{lowerbound} Let $G$ be a finite group. Then $\mathsf E(G)\geq \mathsf d(G)+|G|$.
\end{lemma}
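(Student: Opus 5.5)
The plan is to exhibit a sequence of length $\mathsf d(G)+|G|-1$ that has no $|G|$-product-one subsequence. This forces $\mathsf E(G)>\mathsf d(G)+|G|-1$, which is the claim.

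By the definition of the small Davenport constant, choose a product-one free sequence $T\in\mathcal F(G)$ with $|T|=\mathsf d(G)$, so that $1\notin\Pi(T)$. Set
$$S=T\b 1^{[|G|-1]},$$
which has length $\mathsf d(G)+|G|-1$. Now suppose, for contradiction, that $U\mid S$ satisfies $|U|=|G|$ and $1\in\pi(U)$.

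Since $S$ contains only $|G|-1$ copies of the identity beyond the terms of $T$, we can write $U=1^{[k]}\b T'$ with $k\le |G|-1$ and $T'\mid T$. Because $|U|=|G|>k$, the subsequence $T'$ is nonempty. Fix an ordering of $U$ whose product is $1$ and delete the $k$ identity factors from it. The product is unchanged, and the remaining factors are exactly the terms of $T'$, in some order. Hence $1\in\pi(T')\subseteq\Pi(T)$, which contradicts the fact that $T$ is product-one free.

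Therefore $S$ has no $|G|$-product-one subsequence, and so $\mathsf E(G)\ge \mathsf d(G)+|G|$. The only point that needs care is that deleting identity factors from an ordered product does not change its value even when $G$ is non-abelian; this is immediate, so I expect no real obstacle.
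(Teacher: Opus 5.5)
Your proof is correct: appending $1^{[|G|-1]}$ to a product-one free sequence $T$ of length $\mathsf d(G)$ gives a sequence of length $\mathsf d(G)+|G|-1$ with no $|G|$-product-one subsequence, since deleting the identity factors would leave a nonempty product-one subsequence of $T$. The paper gives no proof of its own and simply cites Lemma 4 of Zhuang--Gao, and this construction is the standard argument for that lemma.
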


\begin{lemma}\cite[Theorem 1]{G1996} \label{Gao}
Let $G$ be a finite abelian group. Then $\mathsf E(G)=\mathsf d(G)+|G|$.
\end{lemma}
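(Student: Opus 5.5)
The lower bound $\mathsf E(G)\geq \mathsf d(G)+|G|$ is already given by Lemma \ref{lowerbound}. So the plan is to prove the upper bound: with $n=|G|$ (written additively for the abelian $G$), every sequence $S$ of length $\mathsf d(G)+n$ contains a zero-sum subsequence of length exactly $n$.

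\textbf{Normalisation.} Since $n\cdot g=0$ for every $g\in G$, replacing $S$ by a translate $(-g)*S$ changes neither the length nor the set of $n$-term zero-sum subsequences. So I may assume that $0$ is a term of maximal multiplicity, $h=\mathsf h(S)=\mathsf v_0(S)$. Write $S=0^{[h]}\bm\cdot S_0$ with $|S_0|=\mathsf d(G)+n-h$. It then suffices to find a zero-sum subsequence $T\mid S_0$ with $n-h\leq |T|\leq n$, because $T$ can be padded with $n-|T|\leq h$ zeros.

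\textbf{Key auxiliary lemma (the main step).} Every sequence $U$ over $G$ with $|U|\geq n$ has a nonempty zero-sum subsequence of length at most $\mathsf h(U)$. I would prove this with a Scherk/Moser-type sumset argument. Arrange $U$ into $\mathsf h(U)$ "layers" of distinct elements, $U=U_1\bm\cdot\ldots\bm\cdot U_h$, each $U_i$ squarefree. Then look at the set $\Sigma(U_1)\cup\ldots$ of partial sums and show it grows strictly at each step until $0$ appears. This uses Kneser's theorem on $A+(\{0\}\cup U_i)$; a stabiliser obstruction would give a zero-sum inside a subgroup coset. Counting, $\sum|U_i|\geq n$ forces a zero sum using at most one element per layer, hence of length at most $h$.

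\textbf{Assembling the subsequence.} Apply the auxiliary lemma repeatedly to $S_0$, where $\mathsf h(S_0)\leq h$. While the remaining sequence has length $\geq n$, I peel off disjoint zero-sum blocks of length $\leq h$. I stop the first time the accumulated length reaches $\geq n-h$, which keeps the total $\leq n$.

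Only the $\leq h$-block extraction is guaranteed while the remainder has length $\geq n$. It gives accumulated length at least $|S_0|-n+1=\mathsf d(G)-h+1$, which falls short of $n-h$ in general. So I expect this counting step to be the real obstacle.

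My first attempt would be to strengthen the auxiliary lemma to: every $U$ with $|U|\geq \mathsf d(G)+1$ has a zero-sum subsequence of length at most $h$. That is, I would replace the threshold $n$ by $\mathsf d(G)+1$, using that the Kneser stabiliser argument really only needs $U$ to be non-zero-sum-free. With this version the extraction continues until fewer than $\mathsf d(G)+1$ terms remain. The accumulated length is then at least $|S_0|-\mathsf d(G)=n-h$, which is exactly what the padding step needs.
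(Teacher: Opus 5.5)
\textbf{The gap is in the final counting step.} Your normalisation is fine, and so is your first auxiliary lemma: every $U$ with $|U|\ge |G|$ has a nonempty zero-sum subsequence of length at most $\mathsf h(U)$. This is Gao's lemma, and it follows cleanly from Scherk's theorem applied successively to the sets $\{0\}\cup X_i$, where $X_i$ is the support of your $i$-th layer.

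The strengthened lemma with threshold $\mathsf d(G)+1$ is false. Your justification for it does not hold either, because the contradiction in the sumset argument comes precisely from producing more than $|G|$ distinct sums, that is, from $|U|\ge |G|$. Already in $C_2\oplus C_2$ the sequence $e_1\bm\cdot e_2\bm\cdot (e_1+e_2)$ has length $\mathsf d(G)+1$ and height $1$, but no zero term.

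More to the point, no strategy that only removes zero-sum blocks of length $\le h$ can work. In $G=C_2^3$ (so $n=8$, $\mathsf d(G)=3$), let $S=0^{[2]}\bm\cdot a^{[2]}\bm\cdot b^{[2]}\bm\cdot c\bm\cdot d\bm\cdot e\bm\cdot f\bm\cdot g$, where $a,\ldots,g$ are the seven nonzero elements. Then $h=2$, and the only zero-sum subsequences of $S_0$ of length $\le 2$ are $a^{[2]}$ and $b^{[2]}$. So you can never accumulate $n-h=6$ terms. Yet $0\bm\cdot a\bm\cdot b\bm\cdot c\bm\cdot d\bm\cdot e\bm\cdot f\bm\cdot g$ is a zero-sum subsequence of length $8$. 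A long zero-sum block is unavoidable.

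\textbf{The missing idea is to handle the remainder by maximality rather than by strengthening the lemma.} Keep the threshold $|G|$. Remove disjoint zero-sum blocks $T_1,\ldots,T_u$ of $S_0$ of lengths $\le h$ until the remainder $R$ satisfies $|R|\le n-1$, and set $L=|T_1\bm\cdot\ldots\bm\cdot T_u|$.

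The sequence $0^{[h]}\bm\cdot T_1\bm\cdot\ldots\bm\cdot T_u$ has zero-sum subsequences of every length $m\in[0,L+h]$. For $m\le h$ use $m$ zeros. For $m>h$, take the shortest initial segment $T_1\bm\cdot\ldots\bm\cdot T_k$ of length $\ge m-h$; it has length $\le m-1$, so pad it with at most $h$ zeros.

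Now choose a zero-sum $V\mid R$ of maximal length. Then $R\bm\cdot V^{[-1]}$ is zero-sum free, so $|V|\ge |R|-\mathsf d(G)=n-h-L$, while $|V|\le |R|\le n-1$. Hence $n-|V|\in[1,L+h]$. Then $V$ together with a zero-sum filler of length $n-|V|$ is the required $n$-term zero-sum subsequence.

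The paper gives no proof of this statement and simply cites Gao's 1996 theorem. The completion above is the standard argument.
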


\begin{remark}\label{Remark} By \cite[Theorem 1]{Olson2}, we know the values of $\mathsf d(G)$ for the following two types of group $G$. Combined with Lemma \ref{Gao}, we know if $G\cong C_3\times C_3$ then $\mathsf d(G)=4$ and
$\mathsf E(G)=13=\frac{3|G|-1}{2}$;
and if $G\cong C_3\times C_6$ then $\mathsf d(G)=7$ and $\mathsf E(G)=25=\frac{3|G|}{2}-2$.
\end{remark}

\begin{lemma}\cite[Lemma 5]{GL2010}\label{induction} Let $H$ be a normal subgroup of a finite group $G$. Suppose that $\mathsf E(G)\leq c|H|-1$ for some given constant $c$ with $1<c\leq 2$. Then $\mathsf E(G)\leq c|G|-1$.
\end{lemma}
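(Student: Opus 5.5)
The plan is the standard "lifting through a quotient" argument. I read the hypothesis as $\mathsf E(H)\leq c|H|-1$; the statement writes $\mathsf E(G)$ there, which I take to be a typo, since otherwise the claim is trivial. The idea is to combine Olson's bound $\mathsf E(K)\le 2|K|-1$ for the quotient $K=G/H$ with the assumed bound for $H$. Put $m=|G/H|$, so $|G|=m|H|$, and let $\varphi:G\to G/H$ be the canonical epimorphism, extended to sequences as in Section 2. Take an arbitrary sequence $S$ over $G$ of length $\ell=\lfloor c|G|\rfloor-1$; if $c|G|$ is an integer this is exactly $c|G|-1$, and in general I only need $\ell\ge c|G|-2$ in the count below. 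I want to find a $|G|$-product-one subsequence of $S$.

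First I extract blocks greedily. As long as the remaining part $S'$ of $S$ satisfies $|S'|\ge 2m-1\ge \mathsf E(G/H)$ (Olson), the sequence $\varphi(S')$ has a product-one subsequence of length $m$. So there is $T\mid S'$ with $|T|=m$ and some ordering of $T$ whose product $h_T$ lies in $H$. Remove $T$ and repeat, obtaining disjoint blocks $T_1,\dots,T_k$ with products $h_1,\dots,h_k\in H$. After $j$ blocks have been removed, $\ell-jm$ terms remain. This is $\ge 2m-1$ whenever $j\le c|H|-2$, because $\ell-jm\ge c m|H|-2-jm\ge 2m-2$, and integrality together with a little care about the floor gives $2m-1$; here $c\le 2$ and $m\ge 1$ keep the bookkeeping consistent. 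Hence I can take $k=\lceil c|H|\rceil-1$ blocks, which is $\ge c|H|-1\ge\mathsf E(H)$.

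Next I apply the hypothesis in $H$. The sequence $h_1\bm\cdot\ldots\bm\cdot h_k$ over $H$ has length at least $\mathsf E(H)$. So there are indices $i_1,\dots,i_{|H|}$ and an ordering with $h_{i_{\sigma(1)}}*\cdots*h_{i_{\sigma(|H|)}}=1$. Concatenating the blocks $T_{i_{\sigma(1)}},\dots,T_{i_{\sigma(|H|)}}$, each written in the order realizing its $h$, gives an ordering of $T_{i_1}\bm\cdot\ldots\bm\cdot T_{i_{|H|}}$ whose product is $1$. This subsequence of $S$ has length $m|H|=|G|$. Hence $\mathsf E(G)\le \ell\le c|G|-1$.

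The only delicate point is the arithmetic in the block count, namely matching the number of extractable blocks with $\mathsf E(H)$ when $c|H|$ or $c|G|$ is not an integer. I would handle it by working with $\lceil c|H|\rceil$ and checking that the greedy step survives until $k=\lceil c|H|\rceil-1$. If a rounding mismatch appears, I would fall back to stating the lemma for integral $c|H|$, which covers all the uses later in the paper: there $c=\tfrac32$ with $|H|$ even, or the bound is applied directly in integer form.
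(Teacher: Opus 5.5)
Your strategy is the standard argument behind this cited lemma, and the paper gives no proof of its own to compare against. Writing $m=|G/H|$, Olson's bound $\mathsf E(G/H)\le 2m-1$ lets you peel off disjoint blocks of length $m$ whose products lie in $H$. You then apply the hypothesis, correctly read as $\mathsf E(H)\le c|H|-1$, to the block products and concatenate the blocks. The reading of the typo, the greedy extraction and the lifting step are all fine.

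The block count is wrong. Starting from $\ell=\lfloor c|G|\rfloor-1$, after $j$ blocks you have $\ell-jm=\lfloor (c|H|-j)m\rfloor-1$ terms left. This is $\ge 2m-1$ exactly when $j\le c|H|-2$. So Olson's bound guarantees $\lfloor c|H|\rfloor-1$ blocks, not $\lceil c|H|\rceil-1$. When $c|H|\notin\mathbb Z$ these differ by one, and your inequality $k\ge c|H|-1$ fails. For instance, take $m=2$, $c=\frac32$ and $|H|$ odd. After $\frac{3|H|-3}{2}$ blocks only $2<3$ terms remain, and they need not have a product in $H$.

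The repair is one line. $\mathsf E(H)$ is an integer, so $\mathsf E(H)\le c|H|-1$ already gives $\mathsf E(H)\le\lfloor c|H|\rfloor-1$, which is exactly the number of blocks you do get. A cleaner option is to start from a sequence of length $(\mathsf E(H)+1)m-1\le c|G|-1$. Then exactly $\mathsf E(H)$ blocks come out, no rounding arises, and you have proved $\mathsf E(G)\le(\mathsf E(H)-1)m+\mathsf E(G/H)\le c|G|-1$.

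Your fallback is unnecessary, and its justification is wrong. In the paper the lemma is applied with $H=K$ of index $2$ in $G$, where $4\nmid|G|$. So $|K|$ is odd and $\frac32|K|$ is a half-integer, and a version restricted to integral $c|H|$ with $c=\frac32$ would not cover that application. Restricting to integral $c|H|$ loses nothing, since you can always replace $c$ by $c'=(\mathsf E(H)+1)/|H|\in(1,c]$. But with the direct fix above you never need this.
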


\begin{lemma} \cite[Theorem 3.9 in Chapter 2]{Xu} \label{knowngroup}
Let $G$ be a group of order $|G|=2m$ with $m$ odd. Then $G$ has a subgroup of index $2$.
\end{lemma}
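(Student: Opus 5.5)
The plan is to use the left regular representation of $G$ together with the sign homomorphism of the symmetric group. Let $\lambda\colon G\to S_{2m}$ be the map sending $g$ to the permutation $x\mapsto g*x$ of the $2m$ elements of $G$, after fixing a labelling of $G$ by $\{1,\ldots,2m\}$. This map is an injective group homomorphism (Cayley's theorem). Composing it with $\mathrm{sgn}\colon S_{2m}\to\{\pm1\}$ gives a homomorphism
\[
\psi=\mathrm{sgn}\circ\lambda\colon G\to\{\pm 1\}.
\]
It then suffices to show that $\psi$ is surjective. In that case $\ker\psi$ is a (normal) subgroup of $G$ of index $2$.

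To show surjectivity, I will exhibit an element $t$ with $\psi(t)=-1$. Since $2$ divides $|G|$, Cauchy's theorem gives an element $t\in G$ of order $2$. The permutation $\lambda(t)$ has no fixed points, because $t*x=x$ forces $t=1$. Also $\lambda(t)^2=\lambda(t^2)=\mathrm{id}$. Hence every orbit of $\langle\lambda(t)\rangle$ on $G$ has size exactly $2$, and $\lambda(t)$ is a product of $2m/2=m$ disjoint transpositions. Its sign is therefore $(-1)^m=-1$, because $m$ is odd. So $\psi(t)=-1$, and $\psi$ is onto.

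None of the steps is a real obstacle. The only point needing care is that the cycle decomposition of $\lambda(t)$ consists exactly of $m$ transpositions with no fixed points, which follows from the fact that left multiplication by a non-identity element has no fixed points. The oddness of $m$ is used precisely to make this permutation odd.
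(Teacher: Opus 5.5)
Your proof is correct. The map $\mathrm{sgn}\circ\lambda$ sends an involution $t$ to $(-1)^m=-1$, because $\lambda(t)$ is a fixed-point-free involution on $2m$ points, so its kernel has index $2$. The paper gives no proof of its own and simply quotes the result from a textbook (Xu, Theorem 3.9 of Chapter 2); your Cayley-embedding and sign argument is the standard proof of that fact.
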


\begin{lemma}\cite[Theorem 1.2]{GLQ}\label{GLQ} Let $G$ be a non-cyclic group of odd order. If $G\not\cong C_3\times C_3$, then $\mathsf E(G)\leq \frac{3|G|-3}{2}$.
\end{lemma}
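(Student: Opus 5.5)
The statement is \cite[Theorem 1.2]{GLQ}, so I plan to argue by strong induction on $n=|G|$, using that groups of odd order are solvable (Feit--Thompson). Let $p\ge 3$ be the smallest prime divisor of $n$.

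\textbf{Abelian case.} If $G$ is abelian and non-cyclic, Lemma \ref{Gao} and Lemma \ref{QLT} give
$$\mathsf E(G)=\mathsf d(G)+n\le n+\frac{n}{p}+p-2.$$
For $p=3$ this is $\frac{4n}{3}+1$, which is at most $\frac{3n-3}{2}$ exactly when $n\ge 15$. The only odd non-cyclic order below $15$ is $9$, and that group is the excluded $C_3\times C_3$. For $p\ge5$ the bound is stronger still, since $\frac{n}{p}+p-2\le\frac{n}{2}-\frac32$ once $n\ge p^2$.

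\textbf{Non-abelian case: the lifting step.} By solvability, $G$ has a normal subgroup $N$ of prime index $q\ge3$. Take a sequence $S$ over $G$ of length $\ell$ and set $t=\mathsf E(N)$. Because $G/N\cong C_q$ is abelian with $\mathsf E(C_q)=2q-1$, any $2q-1$ terms of $S$ contain $q$ terms whose images multiply to $1$ in $G/N$; in any order their product lies in $N$. Extracting such blocks one at a time yields $t$ disjoint blocks $B_1,\dots,B_t$ of length $q$, provided
$$\ell\ge q(t-1)+2q-1=qt+q-1.$$
Fix an ordering of each block; this gives a product $b_i\in N$. The sequence $b_1\bm\cdot\ldots\bm\cdot b_t$ has a $|N|$-product-one subsequence. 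Concatenating the corresponding ordered blocks gives a product-one subsequence of $S$ of length $q|N|=n$. Hence
$$\mathsf E(G)\le q\,\mathsf E(N)+q-1.$$

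\textbf{Applying the induction hypothesis.} If $N$ is non-cyclic and $N\not\cong C_3\times C_3$, induction gives $\mathsf E(N)\le\frac{3|N|-3}{2}$. Then
$$\mathsf E(G)\le \frac{3n-3q}{2}+q-1=\frac{3n-q-2}{2}\le\frac{3n-3}{2}.$$
If $N\cong C_3\times C_3$, then $\mathsf E(N)=13$ by Remark \ref{Remark}, and this bound is too weak when $n=9q$. In that case I will not use $N$ at all. If $q\ne 3$, I take instead a normal subgroup of index $3$ or a normal Sylow subgroup. If $n=27$, I treat the two non-abelian groups of order $27$ directly: each has a normal subgroup isomorphic to $C_9$, which reduces it to the cyclic case below.

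\textbf{Main obstacle: $N$ cyclic.} If every available choice of $N$ is cyclic, then $G$ is metacyclic with cyclic normal subgroup $N=\langle x\rangle$ of index $q\ge3$. Here $\mathsf E(N)=2|N|-1$, so the lifting step gives only about $2n$, and a genuinely different argument is needed. My first attempt would be the following.
\begin{itemize}
\item Take $S$ of length $\frac{3n-3}{2}$. Translating $S$ by a suitable element $g$ (replacing $S$ by $g*S$) keeps the problem equivalent, because $|S|$-products only shift by a power of $g$ that can be compensated by the length $n$. So I may assume the most frequent coset $N$ contains many terms.
\item Write each term as $x^{a}y^{j}$. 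Then reduce the problem to a weighted zero-sum problem over $\mathbb Z_{|N|}$, where the action of $y$ on $N$ supplies multipliers $r^{j}$ from conjugation $y x y^{-1}=x^{r}$.
\item Use Lemma \ref{QLT} together with the product-one free bound $\mathsf d(G)\le \frac{n}{3}+1$ to show that a product-one subsequence of length exactly $n$ exists. The key point is that the non-triviality of $r$ provides extra flexibility in ordering, which beats the cyclic bound.
\end{itemize}
This metacyclic case is where I expect the real difficulty to lie.
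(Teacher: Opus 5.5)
\textbf{Verdict: genuine gap.} The paper does not prove this statement; it quotes it from \cite[Theorem 1.2]{GLQ}. Judged on its own, your sketch leaves the main case open, although the steps you actually carry out are correct. These are the abelian case via Lemmas~\ref{Gao} and~\ref{QLT} (with the threshold $n\ge 15$ for $p=3$), the lifting bound $\mathsf E(G)\le q\,\mathsf E(N)+q-1$, and the induction estimate $\frac{3n-q-2}{2}\le\frac{3n-3}{2}$.

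\textbf{The metacyclic case.} The missing piece is a cyclic normal subgroup of prime index, which is the real content of the theorem. Groups such as $C_7\rtimes C_3$, or $C_7\rtimes C_9$ with $C_9$ acting through $C_3$, have only cyclic proper normal subgroups. So no induction is available, and your bullet points would have to carry the whole proof. As stated they cannot:
\begin{itemize}
\item Left translation $S\mapsto g*S$ preserves the problem only for central $g$. In a non-abelian group $(g*a_1)*\cdots*(g*a_n)\neq g^n*a_1*\cdots*a_n$ in general, and the $n$-products of $g*T$ are not a translate of those of $T$. This is why the proof of Lemma~\ref{S3C3} translates only by $z^e\in Z(G)$.
\item A bound on $\mathsf d(G)$ gives no upper bound on $\mathsf E(G)$ for non-abelian $G$. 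Lemma~\ref{lowerbound} gives only $\mathsf E(G)\ge \mathsf d(G)+|G|$. The reverse inequality is proved for abelian groups (Lemma~\ref{Gao}) and some special families, not in general.
\item The reduction to a weighted zero-sum problem over $\mathbb Z_{|N|}$ is only named, never carried out.
\end{itemize}

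\textbf{Order $27$.} Your handling of this case is also wrong. The Heisenberg group of order $27$ has exponent $3$, so it contains no copy of $C_9$. Its nontrivial proper normal subgroups are the centre $C_3$ and four maximal subgroups, all isomorphic to $C_3\times C_3$. Lifting through a maximal subgroup gives only $\mathsf E(G)\le 3\cdot 13+2=41$, and through the centre only $9\cdot 4+13=49$. The target is $\frac{3\cdot 27-3}{2}=39$, so this group falls into none of your cases and needs its own argument. The other non-abelian group of order $27$ does have a normal $C_9$, but that only sends it to the unresolved metacyclic case. Your subcase $N\cong C_3\times C_3$ with $q\neq 3$ is vacuous for non-abelian $G$, since $|\mathrm{GL}_2(3)|=48$, but that is harmless.
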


\begin{lemma}\cite[Theorem 1.2]{ORZZ}\label{ZQZk} Let $G=C_m \rtimes C_2$ be a metacyclic group of order $2m$ with $m\geq 3$. Then $\mathsf E(G)=\frac{3|G|}{2}$.
\end{lemma}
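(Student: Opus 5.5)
The equality $\mathsf E(G)=\frac{3|G|}{2}=3m$ splits into a lower bound and an upper bound, and I would prove them separately. Throughout, write $G=\langle x\rangle\rtimes\langle y\rangle$ with $\operatorname{ord}(x)=m$, $y^2=1$ and $yxy^{-1}=x^r$, where $r^2\equiv 1\pmod m$. Put $N=\langle x\rangle\cong C_m$; it is normal of index $2$.

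\textbf{Lower bound.} By Lemma \ref{lowerbound} it suffices to show $\mathsf d(G)\ge m$. Take $S=x^{[m-1]}\bm\cdot y$. Any nonempty subsequence product avoiding $y$ equals $x^k$ with $1\le k\le m-1$, which is not $1$. Any subsequence product using $y$ lies in the coset $yN$, which does not contain $1$. Hence $S$ is product-one free, $\mathsf d(G)\ge m$, and $\mathsf E(G)\ge m+2m=3m$.

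\textbf{Upper bound.} Let $S$ have length $3m$. I need a product-one subsequence of length $2m$. Write $S=T_1\bm\cdot T_2$ with $T_1\in\mathcal F(N)$ and $T_2\in\mathcal F(yN)$.

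A subsequence of length $2m$ can only have product $1$ if it uses an even number of terms from $T_2$. Group the chosen $T_2$-terms into pairs $(yx^{i},yx^{j})$. Such a pair has product $x^{ri+j}$ or $x^{rj+i}$ depending on the order. So each pair contributes a \emph{set} of at most two exponents in $\mathbb Z_m$. Ordering the whole word as the $N$-terms followed by the pairs, and using that $N$ is abelian, the problem becomes additive in $\mathbb Z_m$:
\[
\text{find } 2k \text{ terms of } T_2 \text{ and } 2m-2k \text{ terms of } T_1 \text{ such that } 0\in a_1+\cdots+a_{2m-2k}+A_1+\cdots+A_k,
\]
where the $a_t$ are exponents of the chosen $T_1$-terms and $A_1,\dots,A_k$ are the pair-sets. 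In fact the products of interleaved orderings give even more flexibility, which I would keep in reserve.

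I would then split into cases according to $|T_2|$.

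\emph{Case $|T_2|\le 1$.} Then $|T_1|\ge 3m-1\ge 2m-1+m$. Applying the Erd\H{o}s--Ginzburg--Ziv theorem in $C_m$ twice extracts two disjoint zero-sum subsequences of length $m$, which together give the required $2m$ terms.

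\emph{General case.} Pair up the terms of $T_2$ arbitrarily. Each pair acts as a single element of $N$, but it costs two terms of the length budget.

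\begin{itemize}
\item If $T_1$ is long, use EGZ in $C_m$ on $T_1$ together with some pair-products to reach exactly $m$ units of zero-sum twice.
\item If $T_2$ dominates, repeat the EGZ count on blocks of pairs, since $\lfloor |T_2|/2\rfloor$ is large.
\item In the intermediate range, the counting is tight. There I would use Cauchy--Davenport (when $m$ is prime) or a Kneser-type argument on the sumsets $A_1+\cdots+A_k$ plus singletons. The aim is to show the total sumset covers $\mathbb Z_m$ unless most terms lie in a single coset of a proper subgroup of $N$, and to handle that degenerate configuration directly.
\end{itemize}

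The main obstacle is this intermediate range. The plain pairing count gives only about $\frac{3m}{2}$ effective elements in $C_m$, while the naive EGZ approach would need about $2m-1$. The argument must exploit that $|A_j|=2$ whenever $(r-1)(i-j)\not\equiv 0\pmod m$, together with the freedom to re-pair the terms of $T_2$. For the degenerate situations, where pairs have one-point product sets or everything is concentrated in a coset of a subgroup of $N$, I would first try induction on $m$. This means passing to the subgroup $\langle x^d\rangle\rtimes\langle y\rangle$ of the same metacyclic form, combined with the quotient argument behind Lemma \ref{induction}.
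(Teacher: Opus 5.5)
Your lower bound is correct and standard: $x^{[m-1]}\bm\cdot y$ is product-one free, so $\mathsf d(G)\ge m$, and Lemma \ref{lowerbound} gives $\mathsf E(G)\ge 3m$. Your case $|T_2|\le 1$ is also fine. (The paper does not prove this lemma; it imports it from \cite{ORZZ}.)

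The upper bound, however, is the entire content of the cited theorem, and it is not proved. The remaining bullets are intentions, and as described they fail:
\begin{itemize}
\item First bullet. Classical EGZ in $C_m$ counts a pair from $T_2$ as one element. So an $m$-element zero-sum block containing $k$ pair-products has $m+k$ original terms. Two such blocks total exactly $2m$ terms only if no pairs are used, which is the case $|T_1|\ge 3m-1$ you already handled. What you need is a length-weighted zero-sum statement, and you never formulate one.
\item Second bullet (``$T_2$ dominates''). It fails already at the extreme $|T_2|=3m$. There are only $\lfloor 3m/2\rfloor<2m-1$ pairs, so an EGZ count on pairs cannot produce $m$ of them with zero sum. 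This extreme belongs to the tight range, not to an easy one.
\item Induction. Lemma \ref{induction} applied to a cyclic normal $L\le N$ only yields $2|G|-1$. Applied to a normal $K\cong C_{m/d}\rtimes C_2$ with the inductive value $\mathsf E(K)=\frac32|K|$, it only yields $\frac32|G|+[G:K]-1$. When the terms merely lie in a coset of such a subgroup, you cannot translate them into it, because left translation by a non-central element need not preserve product-one subsequences of length $|G|$.
\end{itemize}

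The missing idea is where non-cyclicity enters. Your parametrization allows $r=1$; for $m$ odd this gives $G\cong C_{2m}$ with $\mathsf E(G)=4m-1>3m$. The lemma is only meant for non-cyclic $G$, and is only used that way in the proof of Theorem \ref{maintheorem}. Any valid argument must use that hypothesis, and your sketch never does.

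Concretely, take $S=1^{[m+1]}\bm\cdot (yx)^{[2m-1]}$. The only candidates of length $2m$ are $1^{[2m-2k]}\bm\cdot (yx)^{[2k]}$ with $\frac{m-1}{2}\le k\le m-1$. Their product is $(yx)^{2k}=x^{k(r+1)}$. So $S$ has a product-one subsequence of length $2m$ if and only if $\gcd(r+1,m)\ge 2$.

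In this example every pair-set $A_j$ is a singleton, so the mechanism you plan to exploit ($|A_j|=2$ when $(r-1)(i-j)\not\equiv 0$) gives nothing. What rescues the non-cyclic case is arithmetic: $m\mid (r-1)(r+1)$ forces $\gcd(r+1,m)\ge 2$ unless $r\equiv 1$ and $m$ is odd. For $r=1$ and $m$ even, use Lemma \ref{Gao} with $\mathsf d(C_2\oplus C_m)=m$ instead.

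A genuine proof of $\mathsf E(G)\le 3m$ must build this fact in and must actually settle the tight range. That is what \cite{ORZZ} provides and what your proposal leaves open.
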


\begin{lemma}\cite[Theorem 1.9]{GZ2006}\label{structure} Let $G\cong C_n\times C_n$ with $n\geq 2$. Let $S$ be a sequence over $G$ of length $|S|=n^2+2n-3$, and let $g\in G$ with $\mathsf v_g(S)=\mathsf h(S)$. If $S$ has no product-one subsequence of length $n^2$, then $S$ has one of the following two forms (by rearranging the subscripts if necessary):
\begin{itemize}
\item[(i)] $g^{[\mathsf h(S)]}\bm\cdot a^{[mn-1]}\bm\cdot T$ with $|T|=n-1$, where $(g^{-1}*a)^{[n-1]}\bm\cdot (g^{-1}*T)$ is a product-one free sequence and $\mathsf h(S)=n^2+n-mn-1$.
\item[(ii)] $g^{[\mathsf h(S)]}\bm\cdot a^{[mn-1]}\bm\cdot b^{[tn-1]}$, where $(g^{-1}*a)^{[n-1]}\bm\cdot (g^{-1}*b)^{[n-1]}$ is a product-one free sequence and $\mathsf h(S)+mn+tn=n^2+2n-1$.
\end{itemize}
\end{lemma}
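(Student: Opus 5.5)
Since the statement is cited from \cite[Theorem 1.9]{GZ2006}, the following is only a plan for reproving it. Write $G=C_n\times C_n$ additively, with $0$ as the identity, so that $|G|=n^2$.

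\emph{Normalisation.} Replace $S$ by $g^{-1}*S$. Any subsequence $T$ of length $n^2$ has its product changed by $g^{-n^2}=1$. Hence the property of having no product-one subsequence of length $n^2$ is invariant under this translation, and we may assume $g=1$, so that $1$ has the maximal multiplicity $\mathsf h(S)$.

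\emph{Known constants.} For $C_n\times C_n$ we have $\mathsf d(G)=2n-2$, and by Lemma \ref{Gao} also $\mathsf E(G)=n^2+2n-2$. So $|S|=\mathsf E(G)-1$ is the extremal length, and the task is an inverse problem. I will also use the invariant $\mathsf s(G)=4n-3$: every sequence of length $4n-3$ has a product-one subsequence of length exactly $n$.

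\emph{Block decomposition.} Greedily remove disjoint product-one subsequences $B_1,B_2,\ldots$ of length $n$ from $S$, for as long as the remainder has length at least $4n-3$. Since $|S|=n^2+2n-3=(n-1)n+(3n-3)$, this yields $n-1$ blocks and a remainder $R$ of length $3n-3$.

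If $R$ contained a product-one subsequence of length $n$, then together with the $n-1$ blocks we would obtain a product-one subsequence of length $n^2$, a contradiction. So $R$ is an extremal sequence for $\mathsf s(G)$. By the inverse theorem for $\mathsf s(C_n\times C_n)$ (Property B, now proved by Reiher and by Gao, Geroldinger and Grynkiewicz), $R=x^{[n-1]}\bm\cdot y^{[n-1]}\bm\cdot z^{[n-1]}$ with $x^{-1}y$ and $x^{-1}z$ generating $G$. In particular $R$ contains an element of multiplicity $n-1$.

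\emph{Exchange step.} The decomposition is not canonical, and I exploit this freedom. Take any block $B_i$ and any term $u\mid B_i$. Any rechoice of $n$ product-one blocks inside $B_i\bm\cdot R$ must again leave a remainder of the rigid form $x'^{[n-1]}\bm\cdot y'^{[n-1]}\bm\cdot z'^{[n-1]}$. Comparing the two forms, I expect to show that every term of every block lies in $\{x,y,z\}$, up to a single exceptional set of at most $n-1$ terms.

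\emph{Reading off the two forms.}
\begin{itemize}
\item[$\bullet$] If the exceptional terms do not exist, the whole of $S$ is supported on three elements. Since $1$ has maximal multiplicity, we get $S=1^{[\mathsf h]}\bm\cdot a^{[mn-1]}\bm\cdot b^{[tn-1]}$. The multiplicities are pinned down by two facts. Multiplicities must be $\equiv -1 \pmod n$ for $a$ and $b$, since otherwise whole $n$-blocks $a^{[n]}$ or $b^{[n]}$ could be traded. And the remaining $(n-1)+(n-1)$ terms, after translating, must form the product-one free sequence $a^{[n-1]}\bm\cdot b^{[n-1]}$. This gives form (ii), and the length count gives $\mathsf h+mn+tn=n^2+2n-1$.
\item[$\bullet$] If the exceptional terms exist, they form a set $T$ of $n-1$ terms replacing the $b$-part, and the same counting gives form (i).
\end{itemize}

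\emph{Main obstacle.} The hard part is the exchange argument: proving rigorously that the terms of the blocks cannot deviate from $\{x,y,z\}$ except in a single set of at most $n-1$ terms. My first attempt is to use the rigidity of the remainder under every choice of blocks. Concretely, for each $u\mid B_i$, I would look for some $v\mid R$ such that swapping $u$ and $v$ keeps $B_i$ product-one; this uses that $R$ generates $G$ through $x^{-1}y$ and $x^{-1}z$. The new remainder must again be of type $x'^{[n-1]}\bm\cdot y'^{[n-1]}\bm\cdot z'^{[n-1]}$, which forces $u\in\{x,y,z\}$ in most cases. The exceptional configuration (i) should arise exactly when such swaps are impossible.
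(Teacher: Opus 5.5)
The paper gives no proof of this lemma; it is quoted from \cite[Theorem 1.9]{GZ2006}. So your plan has to stand on its own. Your normalisation and the count giving $n-1$ blocks and $|R|=3n-3$ are fine, but the plan has a genuine gap where you identify the remainder $R$.

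After the greedy extraction, all you know about $R$ is that it has length $3n-3$ and no product-one subsequence of length exactly $n$. That is not enough for either inverse theorem you might want:
\begin{itemize}
\item the inverse theorem attached to $\mathsf s(G)=4n-3$ describes sequences of length $4n-4$ with this property, not length $3n-3$;
\item a rigid shape with three elements of multiplicity $n-1$ at length $3n-3$ is what the inverse theorem for $\eta(G)=3n-2$ gives, but it requires $R$ to have no product-one subsequence of \emph{any} length in $[1,n]$. Your $R$ need not satisfy this; it may even contain the identity.
\end{itemize}
In fact your conclusion is false, already for $n=3$, the only case the paper uses. Write $G$ additively with basis $e_1,e_2$ and take $S=0^{[8]}\bm\cdot e_1^{[2]}\bm\cdot e_2\bm\cdot(e_1+e_2)$. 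Since $e_1^{[2]}\bm\cdot e_2\bm\cdot(e_1+e_2)$ is zero-sum free, $S$ has no zero-sum subsequence of length $9$. It is of form (i) with $g=0$, $a=e_1$, $m=1$, $T=e_2\bm\cdot(e_1+e_2)$. Its only zero-sum subsequences of length $3$ are copies of $0^{[3]}$, so your procedure is forced to output $R=0^{[2]}\bm\cdot e_1^{[2]}\bm\cdot e_2\bm\cdot(e_1+e_2)$. This $R$ has no zero-sum subsequence of length $3$, but it is not of the form $x^{[2]}\bm\cdot y^{[2]}\bm\cdot z^{[2]}$. The same happens for every sequence of form (i) with non-constant $T$: the only $n$-term product-one blocks are $g^{[n]}$ and $a^{[n]}$, so all of $T$ lands in $R$. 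Your intermediate claim would therefore rule out part of the very conclusion you are trying to prove.

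Consequently the exchange step, which you already flag as unproved, rests on a false premise. The ``reading off'' paragraph is also assertion rather than argument: nothing in it shows why the exponent of $a$ is $\equiv -1\pmod n$, or why the exceptional part has exactly $n-1$ terms. The shape of (i) and (ii) points to the Davenport constant rather than to $\mathsf s(G)$. After translating $g$ to the identity, the terms different from $g$ should, once full blocks $a^{[n]}$ (and $b^{[n]}$) are discarded, form a product-one free sequence of maximal length $\mathsf d(G)=2n-2$. Property B, which is really a statement about such sequences, then supplies the element $a$ of multiplicity $n-1$, and hence form (i). Form (ii) is the case where the remaining $n-1$ terms are constant and can also be padded by blocks. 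What is missing is an argument for this reduction: a way to use the $\mathsf h(S)$ copies of $g$ as padding, so that product-one subsequences of suitable lengths among the other terms extend to ones of length $n^2$. A decomposition into $n$-term blocks that ignores the special role of $g$ cannot provide this.
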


\begin{remark}\label{remark conclusion} In Lemma \ref{structure}, we can observe that $g\nmid T$, $a\nmid T$ when $S$ has Form (i),
and that $g,a,b$ are three distinct elements when $S$ has form (ii).
Therefore, we have $\mathsf h(S)\equiv -1\pmod n$ and $\mathsf h(S)\geq \frac{n}{3}$ for both forms.
\end{remark}

\begin{lemma}\label{S3C3} Let $G$ be a finite group isomorphic to $S_3\times C_3$ or $(C_3\times C_3)\rtimes_{-1} C_2$. Then $\mathsf E(G)\leq \frac{3|G|}{2}-1=26$.
\end{lemma}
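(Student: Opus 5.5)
Both groups have order $18$ and contain a normal subgroup $N\cong C_3\times C_3$ of index two: in $S_3\times C_3$ take $N=C_3\times C_3$ (the rotation subgroup of $S_3$ times the $C_3$ factor), and in $(C_3\times C_3)\rtimes_{-1}C_2$ take the normal factor. Fix an involution $\sigma\in G\setminus N$. Then every element of $G\setminus N$ can be written as $x\sigma$ with $x\in N$. Let $S$ be a sequence of length $26$ over $G$. We need a product-one subsequence of length $18$.

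\textbf{Step 1 (pairing off the coset $G\setminus N$).} Split $S=S_N\bm\cdot S_\sigma$, where $S_N\in\mathcal F(N)$ and $S_\sigma\in\mathcal F(G\setminus N)$. Any two terms $x\sigma,y\sigma$ of $S_\sigma$ give a product $x\sigma y\sigma=x*\sigma y\sigma^{-1}\in N$. In $(C_3\times C_3)\rtimes_{-1}C_2$ this product is $xy^{-1}$. In $S_3\times C_3$ it is obtained by applying the inversion automorphism to the $S_3$-rotation coordinate only. In either case the pair collapses to a single element of $N$, so we get a sequence over $N$ whose terms have weight $1$ (terms of $S_N$) or weight $2$ (pairs).

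\textbf{Step 2 (reduction to EGZ-type problems in $N$).} We want a product-one sequence of total weight $18$ in $N$. Remark \ref{Remark} gives $\mathsf E(C_3\times C_3)=13$ and $\mathsf d(C_3\times C_3)=4$, and the structural Lemma \ref{structure} applies with $n=3$ (length $n^2+2n-3=12$). We use these as follows.

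\begin{itemize}
\item[(a)] If $|S_N|$ is large, say $|S_N|\ge 13$, extract from it a product-one subsequence of length $9$.
\item[(b)] Then form the pairs from $S_\sigma$ and look for a product-one subsequence of length $9$ in $N$ made of: some remaining singletons, some pairs, and possibly the collapsed $9$-block from (a).
\end{itemize}

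The flexibility in how the $S_\sigma$ terms are paired is the key resource. Since $\sigma$ inverts or twists, different pairings of the same two terms give $xy^{-1}$ versus $yx^{-1}$. This lets us choose which representative lands in $N$.

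\textbf{Step 3 (case analysis on $|S_\sigma|$).}
\begin{itemize}
\item[(a)] If $|S_\sigma|\le 1$, then $|S_N|\ge 25$. Applying $\mathsf E(C_3\times C_3)=13$ twice gives two disjoint product-one $9$-subsequences, which concatenate to length $18$.
\item[(b)] If $|S_\sigma|$ is large, translate so that a most frequent element of $S_\sigma$ becomes $\sigma$. Then look for nine pairs whose $N$-products multiply to $1$. After translation this is an EGZ problem on roughly $|S_\sigma|/2\ge 13$ derived elements, or a problem on $12$ derived elements handled via Lemma \ref{structure}.
\item[(c)] In the intermediate ranges, combine a $9$-block from $S_N$ (when $|S_N|\ge13$) with a weight-$9$ product-one combination of the leftovers. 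The count $|S|=26=13+13$ is exactly what makes this balanced.
\end{itemize}

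The main obstacle is the near-extremal case in (c), where both $S_N$ and the collapsed sequence have about $12$ terms, one short of $\mathsf E(C_3\times C_3)=13$. There we invoke Lemma \ref{structure} with $n=3$. A $12$-term sequence over $C_3\times C_3$ with no product-one $9$-subsequence must be one of the rigid forms (i) or (ii), with height $\equiv 2\pmod 3$. We then show that such rigidity is incompatible with the extra freedom, namely reordering the pairs and swapping which term is used as $x$ versus $y$. This freedom produces a product-one term that can be swapped in. I expect this final rigidity-versus-freedom step to require the most care, since it is separate for the two groups because of their different conjugation actions on $N$.
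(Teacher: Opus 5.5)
Your proposal is a plan rather than a proof, and the part it leaves open is the whole content of the lemma. The configuration you single out as the main obstacle is twelve pair-products in $N$ with no product-one subsequence of length $9$, hence of form (i) or (ii) in Lemma~\ref{structure} with height $8$ or $5$. The sentence that the pairing freedom ``produces a product-one term that can be swapped in'' is an expectation, not an argument. The paper has to make it precise, as follows.
\begin{enumerate}
\item Any admissible change that would lower the multiplicity of the most frequent product $\alpha$ to $7$ or $4$ contradicts $\mathsf h\equiv -1\pmod 3$ (Remark~\ref{remark conclusion}). Here an admissible change means replacing a pair-product $g_{2k-1}*g_{2k}$ by $g_{2k}*g_{2k-1}$, or exchanging a term of a pair with the leftover term from the same coset.
\item From this it follows that the pairs realizing $\alpha$ commute. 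Moreover, according to whether some of them lie outside $N$ or not, those pairs consist of copies of the leftover term of their coset, and the remaining terms are confined to the other coset.
\item If some of these pairs lie outside $N$, one normalizes to $\alpha=1$ (by a translation by the central element $z^e$ when $G\cong S_3\times C_3$) and finishes with $\mathsf d(C_3\times C_3)=4$.
\item If none does, one needs the additional input $\mathsf d(G)\le 7$ and the identity $g*g_{i_1}*g=g_{i_1}$. These let you pad a maximal product-one subsequence of the coset terms with copies of $g$.
\end{enumerate}
None of these steps, nor the two Davenport-constant inputs, appears in your outline.

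Your case split in Step~3 also leaves sequences untreated, and part (c) fails as stated. Part (b) covers only $|S_\sigma|\ge 24$ and part (c) only $|S_N|\ge 13$, so nothing handles $3\le |S_N|\le 12$.

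In (c), an arbitrary product-one $9$-block from $S_N$ need not extend. Write $N=\langle y\rangle\times\langle z\rangle$ with $\sigma$ inverting $y$ and sending $z$ to $z^{\pm1}$. Take $S_N=1^{[9]}\bm\cdot y^{[2]}\bm\cdot z^{[2]}$ and $S_\sigma=\sigma^{[13]}$. After removing $1^{[9]}$, a product-one subsequence of length $9$ of the leftovers would need one or three of the terms $y,y,z,z$, each conjugated by a power of $\sigma$, to multiply to $1$; this is impossible. Yet $1^{[6]}\bm\cdot\sigma^{[12]}$ is a product-one subsequence of $S$ of length $18$.

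The organizing idea you are missing is to pair the terms of $S_N$ with one another as well. This produces $\lfloor |S_N|/2\rfloor+\lfloor |S_\sigma|/2\rfloor\ge 12$ pair-products in $N$. Hence $\mathsf E(N)=13$ settles everything unless $|S_N|$ and $|S_\sigma|$ are both odd. That leaves a single configuration, $12$ pairs plus one leftover term in each coset, which is exactly where Lemma~\ref{structure} and the analysis above are needed.

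Finally, the normalization in (b) needs justification. Left translation by a non-central element does not preserve the products of orderings of a sequence in a non-abelian group. You should use conjugation together with translations by central elements instead.
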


\begin{proof}
Note that
\begin{equation}\label{equation G=structure}
G=\langle x,y,z | x^2=y^3=z^3=1, x^{-1}yx=y^{-1}, x^{-1}zx=z, y^{-1}zy=z\rangle
\end{equation}
or
\begin{equation}\label{equation G2=structure}
G=\langle x,y,z | x^2=y^3=z^3=1, x^{-1}yx=y^{-1}, x^{-1}zx=z^{-1}, y^{-1}zy=z\rangle,
\end{equation}
according to $G\cong S_3\times C_3$ or $G\cong(C_3\times C_3)\rtimes_{-1} C_2$, respectively.
  Let $H=\langle y,z\rangle$. We see \begin{equation}\label{equation H iso C3timesC3}
H\cong C_3\times C_3.
\end{equation} Denote $\varphi:G\longrightarrow G\diagup H$ to be the canonical epimorphism of $G$ onto the quotient group
\begin{equation}\label{equation G/HcongC2}
G\diagup H\cong C_2.
\end{equation}
 Let $S=g_1\bm\cdot g_2\bm\cdot\ldots\bm\cdot g_s$ be a sequence over $G$ of length $s=26$. It suffices to show that $S$ contains a product-one subsequence of length $|G|=18$.
We assume to the contrary that $S$ has no product-one subsequences of length $18$.

Let
\begin{equation}\label{equation factorization of Sinto1tot}
S=T_1\bm\cdot T_2\bm\cdot\ldots\bm\cdot T_t\bm\cdot W
\end{equation}
 be an arbitrary factorization of $S$ into subsequences
with $t$ in $[0,13]$ being {\bf maximal} such that \begin{equation}\label{equation length Ti=2}
|T_i|=|G\diagup H|=2,
\end{equation}
and
\begin{equation}\label{equation pi(Ti)subsetker=H}
\pi(T_i)\subset \ker \varphi=H,
\end{equation}
 where $i\in [1,t]$.
Since $\varphi(S)$ is a sequence over the quotient group $G\diagup H$ of length $|\varphi(S)|=|S|=26=12\times |G\diagup H|+\mathsf E(G\diagup H)-1$, it follows from the maximality of $t$ and the very definition of $\mathsf E(G\diagup H)$ that $t=12$ or $t=13$,
and $|W|=2$ or $|W|=0$, respectively.
We take an arbitrary element
\begin{equation}\label{equation betai in pi(Ti)}
\gamma_i\in \pi(T_i) \mbox{ for each } i\in [1,t].
\end{equation}
Then
\begin{equation}\label{equation tilde(S)}
\tilde{S}=\gamma_1\bm\cdot\gamma_2\bm\cdot\ldots\bm\cdot\gamma_{t}\in \mathcal{F}(H).
\end{equation}
By \eqref{equation length Ti=2} and \eqref{equation betai in pi(Ti)}, we derive that $\tilde{S}$ has no product-one subsequences of length $9=|H|$, since otherwise, we shall obtain a product-one subsequence of $S$ with length 18 which is absurd. By \eqref{equation H iso C3timesC3} and Remark \ref{Remark}, we see $\mathsf E(H)=13$. Combined with \eqref{equation tilde(S)}, we have that in \eqref{equation factorization of Sinto1tot}, $$t=12$$ and $|W|=2$.
We may assume without loss of generality that
\begin{equation}\label{equation Ti=g2i-1g2i}
T_i=g_{2i-1}\cdot g_{2i} \mbox{ and }\gamma_i=g_{2i-1}*g_{2i} \mbox{ for each } i\in [1,12], \mbox{ and } W=g_{25}\cdot g_{26}.
\end{equation}

Since $G\diagup H\cong C_2$ and $\pi(W)\not\subset H$, we may suppose that
\begin{equation}\label{equation g25notinH}
g_{25}\notin H \mbox{ and } g_{26}\in H.
 \end{equation}
Since $|\tilde{S}|=t=12=3^2+2\times 3-3$, by Lemma \ref{structure} and Remark \ref{remark conclusion}, we conclude that
\begin{equation}\label{equation h(S)=8or5}
\mathsf h(\tilde{S})=8 \mbox{ or } \mathsf h(\tilde{S})=5,
\end{equation}
 and furthermore,
  \begin{equation}\label{equation either or}
 \mbox{ either } (i) \ \tilde{S}=\alpha^{[8]}\bm\cdot \beta^{[2]}\bm\cdot \mathcal A, \mbox{ or } (ii) \ \tilde{S}=\alpha^{[5]}\bm\cdot \beta^{[5]}\bm\cdot  \mathcal A,
  \end{equation} respectively,
where
  \begin{equation}\label{equation alpha beta in H}
\alpha, \beta\in H \mbox{ and }  \mathcal A \in \mathcal{F}(H)
   \end{equation}
 with $|\mathcal A|=2$, and $(\alpha^{-1}*\beta)^{[2]}\bm\cdot (\alpha^{-1}*\mathcal A)$ is a product-one free sequence, i.e.,
 \begin{equation}\label{equation 1notin prod a longsequence}
 1\notin \prod\left((\alpha^{-1}*\beta)^{[2]}\bm\cdot (\alpha^{-1}*\mathcal A)\right).
 \end{equation}
Combining \eqref{equation tilde(S)}, \eqref{equation h(S)=8or5} and \eqref{equation either or}, we may assume that (rearrange subscripts if necessary)
\begin{equation}\label{equation gammai=alpha1toh}
\gamma_i=\alpha \mbox{ for } i\in[1,\mathsf h(\tilde{S})],
\end{equation}
 \begin{equation}\label{equation gammaj=beta}
 \gamma_j=\beta \mbox{ for } j\in[\mathsf h(\tilde{S})+1, 10],
 \end{equation} and $\gamma_{11}\cdot \gamma_{12}=\mathcal A$.
Combined with \eqref{equation H iso C3timesC3}, \eqref{equation alpha beta in H} and \eqref{equation 1notin prod a longsequence},  we have
 \begin{equation}\label{equation gamma1112notin}
\gamma_{11}, \gamma_{12}\notin \{\alpha,\beta\}.
\end{equation}

Now we claim that we can make the following assumption.

\noindent {\bf Assumption A.} $\alpha=y^{c} \mbox{ for some } c\in [0,2]$ in case that the group $G$ is given as \eqref{equation G=structure}.

{\sl Argument of Assumption A}.  Since $\alpha\in H$, we have $\alpha=y^{c}z^{d}$ with $c,d\in [0,2]$. Clearly, there exists an integer $e\in [0,2]$ such that $3\mid (d+2e)$. Let $S'=g_1'\bm\cdot g_2'\bm\cdot \ldots\bm\cdot g_{26}'$, where $g_i'=g_i*z^{e}$ for $i\in [1,26]$. Since $z^{e}$ belongs to the center  $Z(G)$ of the group $G$, we have that $S$ has a $|G|$-product-one subsequence if and only if $S'$ has a $|G|$-product-one subsequence. Combined with \eqref{equation Ti=g2i-1g2i} and \eqref{equation gammai=alpha1toh},  we
 may assume without loss of generality that $d=0$ and thus, the assumption holds. \qed

By  \eqref{equation G/HcongC2}, \eqref{equation pi(Ti)subsetker=H} and \eqref{equation Ti=g2i-1g2i}, we derive that $g_{2i-1}\in H$ if and only if $g_{2i}\in H$ for each $i\in [1,12]$. That is,
\begin{equation}\label{equation Ti out or in}
T_i\in \mathcal{F}(G\setminus H) \mbox{ or } T_i\in \mathcal{F}(H), \mbox{ for each } i\in [1,12].
\end{equation}
To proceed, we need to group all these subsequences $T_1,\ldots,T_{\mathsf h(\tilde{S})}$ in \eqref{equation factorization of Sinto1tot} as follows. Without loss of generality, we may suppose that
\begin{equation}\label{equation T1to Tr1out G}
T_1\bm\cdot\ldots\bm\cdot T_{r}\in \mathcal{F}(G\setminus H),
\end{equation}
and
$T_{r+1}\bm\cdot\ldots\bm\cdot T_{\mathsf h(\tilde{S})}\in \mathcal{F}(H)$,
where
\begin{equation}\label{equation r1in}
0\leq r\leq h(\tilde{S}).
\end{equation}

Now we show that
\begin{equation}\label{equation commutates if iin1toh(S)}
g_{2i}*g_{2i-1}=g_{2i-1}*g_{2i}\mbox{ for each } i\in [1, \mathsf h(\tilde{S})].
\end{equation}
To prove \eqref{equation commutates if iin1toh(S)}, we suppose to the contrary that there exists some $k\in [1,\mathsf h(\tilde{S})]$ such that $g_{2k}*g_{2k-1}\neq g_{2k-1}*g_{2k}$. Combined with \eqref{equation Ti=g2i-1g2i} and \eqref{equation gammai=alpha1toh}, we have $g_{2k-1}*g_{2k}=\gamma_k=\alpha$ and $g_{2k}*g_{2k-1}\neq \alpha$.  Combined with \eqref{equation either or}, we see that $\mathsf v_{\alpha}\left(\tilde{S}\cdot \gamma_k^{[-1]} \cdot (g_{2k}*g_{2k-1})\right)\in\{7,4\}$. Hence, the sequence $\tilde{S}\cdot \gamma_k^{[-1]} \cdot (g_{2k}*g_{2k-1})$ does not fall into any one of the two forms given in \eqref{equation either or}.
By the arbitrariness of choosing $\gamma_i$ in \eqref{equation betai in pi(Ti)}, we derive a contradiction. This proves \eqref{equation commutates if iin1toh(S)}.

\medskip

Then we shall divide the proof into two cases according to $r\geq 1$ or $r=0$, where $r$ is given as \eqref{equation r1in}.  \\

\noindent {\bf Case 1.} $r\geq 1$. \\

We first show that \begin{equation}\label{equation g1====g2r-1}
g_i=g_{25} \mbox{ for each } i\in [1,2r].
\end{equation}
Suppose to the contrary that there exists some $k\in [1,r]$ such that $g_{2k-1}\neq g_{25}$ or $g_{2k}\neq g_{25}$.
Say $g_{2k-1}\neq g_{25}$.  Combined with \eqref{equation Ti=g2i-1g2i} and \eqref{equation gammai=alpha1toh}, we have $g_{2k-1}*g_{2k}=\gamma_k=\alpha$ and $g_{25}*g_{2k}\neq \alpha$.
Denote $T_k'=g_{25}\cdot g_{2k}$ and $W'=g_{2k-1}\cdot g_{26}$.  Recall \eqref{equation g25notinH}, $g_{25}\notin H$. By \eqref{equation T1to Tr1out G}, we have $g_{2k}\notin H$, and so $g_{25}*g_{2k}\in H$. Consider the factorization $S=T_1\cdot\ldots\cdot T_{12}\cdot T_k^{[-1]}\cdot T_k'\cdot W'$ and the corresponding sequence $\tilde{S}'=\gamma_1\bm\cdot\ldots\bm\cdot\gamma_{12}\bm\cdot\gamma_k^{[-1]}\bm\cdot(g_{25}*g_{2k})\in \mathcal{F}(H)$.
It follows from \eqref{equation either or} that $\mathsf v_{\alpha}(\tilde{S}')\in\{7, 4\}$. By the arbitrariness of choosing the factorization in \eqref{equation factorization of Sinto1tot} and choosing $\gamma_i$ in \eqref{equation tilde(S)}, we derive a contradiction with  \eqref{equation either or}.
This proves \eqref{equation g1====g2r-1}.


Next we show that
\begin{equation}\label{equation in casethe sequenceinH}
T_{\mathsf h(\tilde{S})+1}\bm\cdot T_{\mathsf h(\tilde{S})+2}\bm\cdot\ldots\bm\cdot T_{12}\bm\cdot g_{26}\in \mathcal{F}(H).
\end{equation}
 Suppose to the contrary that there exists some term $b$ of $T_{\mathsf h(\tilde{S})+1}\bm\cdot T_{\mathsf h(\tilde{S})+2}\bm\cdot\ldots\bm\cdot T_{12}\bm\cdot g_{26}$ such that $b\notin H$. Recall \eqref{equation g25notinH}, $g_{26}\in H$. Recall \eqref{equation Ti out or in}, there exists some $k\in [h(\tilde{S})+1, 12]$ such $T_{k}\in \mathcal{F}(G\setminus H)$, i.e.,
\begin{equation}\label{equation g2k-1g2knotin H new1}
g_{2k-1},g_{2k}\notin H.
\end{equation}
 Note that $g_{2k-1}\neq g_{25}$ or $g_{2k}\neq g_{25}$, since otherwise, $\gamma_k=g_{2k-1}*g_{2k}=g_{25}*g_{25}=g_1*g_2=\gamma_1=\alpha$, a contradiction with \eqref{equation gammaj=beta} and \eqref{equation gamma1112notin}. Assume without loss of generality that
\begin{equation}\label{equation in case1g2m-1neqg25}
g_{2k-1}\neq g_{25}.
\end{equation}
 Then we exchange $g_{2k-1}$ with $g_{25}$ in the factorization \eqref{equation factorization of Sinto1tot}, that is, we take $S=T_1\cdot\ldots\cdot T_{12}\cdot T_{k}^{[-1]} \cdot T_{k}'\cdot W'$ where $T_{k}'= g_{25}\cdot g_{2k}$ and $W'=g_{2k-1}\cdot g_{26}$. Combined with \eqref{equation G/HcongC2}, \eqref{equation g25notinH} and \eqref{equation g2k-1g2knotin H new1}, we have $\pi(T_k')\subset H$.
By the arbitrariness of choosing the factorization of $S$ given as in \eqref{equation factorization of Sinto1tot}, \eqref{equation length Ti=2} and \eqref{equation pi(Ti)subsetker=H}, and by \eqref{equation g25notinH} and \eqref{equation g1====g2r-1}, we conclude that $g_1=g_2=g_{2k-1}$ and so $g_{2k-1}=g_{25}$, a contradiction with \eqref{equation in case1g2m-1neqg25}. This proves \eqref{equation in casethe sequenceinH}.

Now we claim that we can assume that \begin{equation}\label{equation alpha=1 in case 1}
\alpha=1.
\end{equation}
Suppose $G$ is given as \eqref{equation G2=structure}. By \eqref{equation g25notinH}, \eqref{equation gammai=alpha1toh} and \eqref{equation g1====g2r-1}, we have $\alpha=\gamma_1=g_1*g_2=g_{25}*g_{25}=1$, done.
Suppose $G$ is given as \eqref{equation G=structure}. By Assumption A, we have
$\alpha=y^{c},$
and thus, $g_1*g_2=y^{c}$. Since $g_1,g_2\notin H$, it follows from \eqref{equation G=structure} that $g_2*g_1=g_1^{-1}*(g_1*g_2)*g_1=g_1^{-1}*y^{c}*g_1=y^{-c}$. By \eqref{equation commutates if iin1toh(S)}, we have that $g_2*g_1=g_1*g_2$ and so $y^{-c}=y^{c}$, i.e., $y^{2c}=1$. Since $\alpha=y^3=1$, it follows that $y^{c}=1$. Therefore, \eqref{equation alpha=1 in case 1} can be always assumed to hold.  \\

By \eqref{equation H iso C3timesC3}, \eqref{equation either or}, \eqref{equation alpha beta in H}, \eqref{equation gammai=alpha1toh}, \eqref{equation gammaj=beta} and \eqref{equation alpha=1 in case 1}, for any $k\in [1,8]$ we can always find a product-one subsequence of $\gamma_1\cdot\ldots\cdot \gamma_{8}$ with length $k$, i.e, we can find a product-one subsequence of $T_1\cdot\ldots\cdot T_{8}$ with length $2k$. By the assumption that $S$ has no product-one subsequence of length 18, we conclude that $T_{9}\bm\cdot T_{10}\bm\cdot T_{11}\bm\cdot T_{12}\bm\cdot g_{26}$ has no nonempty product-one subsequence of even length. Denote $V=T_{9}\bm\cdot T_{10}\bm\cdot T_{11}\bm\cdot T_{12}\bm\cdot g_{26}$. Recalling Remark \ref{Remark}, we have $\mathsf d(H)=\mathsf d(C_3\times C_3)=4$. Since $|V|=9=2\mathsf d(H)+1$, we conclude that $V$ has no product-one subsequence of length less than $5$, since otherwise, we shall find two nonempty disjoint product-one subsequences of $V$ which always produces a nonempty product-one subsequence of $V$ with even length, a contradiction. Since $\mathsf d(H)+1=5$, it follows that every subsequence $U$ of $V$ of length $5$ is product-one.
This implies that all terms of $V$ are equal.  Since $V\in \mathcal{F}(H)$ and $H\cong C_3\times C_3$, every subsequence of $V$ with length $6$ is product-one, a contradiction.

\medskip

\noindent {\bf Case 2.} $r=0$. \\

 In a similar argument as \eqref{equation g1====g2r-1} and \eqref{equation in casethe sequenceinH}, we can prove that
\begin{equation}\label{equation g1==g26}
g_i=g_{26} \mbox{ for each } i\in [1,2\mathsf h(\tilde{S})],
\end{equation}
\begin{equation}\label{equation all equals including 25}
T_{\mathsf h(\tilde{S})+1}\bm\cdot T_{\mathsf h(\tilde{S})+2}\bm\cdot\ldots\bm\cdot T_{12}\bm\cdot g_{25}\in \mathcal{F}(G\setminus H).
\end{equation}

Denote $T=T_{\mathsf h(\tilde{S})+1}\bm\cdot T_{\mathsf h(\tilde{S})+2}\bm\cdot\ldots\bm\cdot T_{12}\bm\cdot g_{25}$. By \eqref{equation length Ti=2} and \eqref{equation h(S)=8or5}, we see that $|T|=2\times (12-h(\tilde{S}))+1\geq 2\times (12-8)+1=9$. We take a product-one subsequence $U$ of $T$ with a maximal length. Since $G\diagup H\cong C_2$, it follows from \eqref{equation all equals including 25} that the length $|U|$ is even, combining with \eqref{equation h(S)=8or5}, we have
 \begin{equation}\label{equation |U|leq 10}
 |U|\leq 2\lfloor\frac{|T|}{2}\rfloor=2\times (12-\mathsf h(\tilde{S}))=24-2\mathsf h(\tilde{S})\leq 24-2\times 5=14.
  \end{equation}
  By Table 1 in \cite{CDS}, we see $\mathsf d(G)\leq 7$.  By the maximality of $|U|$, we have that $|T|-|U|=|T\cdot U^{[-1]}|\leq \mathsf d(G)=7$, and so
   \begin{equation}\label{equation |U|geq}
  |U|\geq |T|-7=2\times (12-\mathsf h(\tilde{S}))+1-7=18-2\mathsf h(\tilde{S}).
    \end{equation}
   Denote $\ell=|U|$. By \eqref{equation |U|leq 10} and \eqref{equation |U|geq}, we have
      \begin{equation}\label{equation 8leq18-ellleq}
   4\leq 18-\ell\leq 2 \mathsf h(\tilde{S}).
       \end{equation}
   By \eqref{equation Ti=g2i-1g2i}, we can write $U=g_{i_1}\bm\cdot \ldots \bm\cdot g_{i_{\ell}}$, where $i_1,i_2,\ldots,i_{\ell}\in [2\mathsf h(\tilde{S})+1,25]$. Since $1\in \pi(U)$, we may assume without loss of generality that $g_{i_1}*g_{i_2}*\cdots *g_{i_{\ell}}=1$.
 Denote $g$ to be the element of $G$ equals to $g_{26}$  (in what follows, we use $g$ instead of $g_{26}$ to avoid confusion caused by subscripts).

Now we show that
$$g*g_{i_1}*g=g_{i_1}$$
Suppose $G$ is given as \eqref{equation G2=structure}. By \eqref{equation all equals including 25}, $g_{i_1}\notin H$. By \eqref{equation g25notinH}, we see $g=g_{26}\in H$. By \eqref{equation G2=structure}, we can check that  $g*g_{i_1}*g=g_{i_1}$, done.
Suppose $G$ is given as \eqref{equation G=structure}. By Assumption A, we have
$\alpha=y^{c}.$ Combined with \eqref{equation Ti=g2i-1g2i}, \eqref{equation gammai=alpha1toh} and \eqref{equation g1==g26}, we have that $g^2=g_{26}*g_{26}=g_1*g_2=\gamma_1=\alpha=y^c$, and so
$g=y^{2c}.$
Since $g_{i_1}\notin H$, it follows from \eqref{equation G=structure} that $g*g_{i_1}*g=y^{2c}*g_{i_1}*y^{2c}=g_{i_1}$. This proves $g*g_{i_1}*g=g_{i_1}$.

Then it follows that $g^{\frac{18-\ell}{2}}*g_{i_1}*g^{\frac{18-\ell}{2}}* (g_{i_2}*\cdots *g_{i_{\ell}})=g_{i_1}*g_{i_2}*\cdots *g_{i_{\ell}}=1$.  By \eqref{equation g1==g26}, we see that $g^{[2\mathsf h(\tilde{S})]}\mid S\bm\cdot U^{[-1]}$. Combined with \eqref{equation 8leq18-ellleq}, we have that  $g^{[\frac{18-\ell}{2}]}\cdot g_{i_1}\cdot g^{[\frac{18-\ell}{2}]}\cdot (g_{i_2}\cdot \ldots \cdot g_{i_{\ell}})$ is a product-one subsequence of $S$ with length $18$, which contradicts to the assumption.

In all cases, we get a contradiction. Thus $S$ has a product-one subsequence of length $|G|$. This completes the proof of the lemma.
\end{proof}

Now we are in a position to prove Theorem \ref{maintheorem}.

\medskip

\noindent {\bf Proof of Theorem \ref{maintheorem}.}
Denote $n=|G|$. We shall two consider two cases according to whether $G$ has a cyclic subgroup of index $2$ or not.

We first assume that $G$ has a cyclic subgroup $H$ of index $2$. We show that $\mathsf E(G)=\frac{3|G|}{2}$ in this case.
Since $G$ is non-cyclic,
it follows from Lemma \ref{QLT} that $\mathsf d(G)\leq \frac{n}{2}.$
If $G$ is abelian, then by Lemma~\ref{Gao}, $\mathsf E(G)=|G|+\mathsf d(G)\leq n+\frac{n}{2}=\frac{3|G|}{2}$, done. Hence, we suppose $G$ is non-abelian.
Since $H$ is normal in $G$ and $G$ is non-cyclic, we have that $G\cong C_{\frac{n}{2}}\rtimes C_2$. Then the conclusion follows from Lemma \ref{ZQZk} immediately.

Now we assume that $G$ has no cyclic subgroup of index $2$.
It remains to show that $\mathsf E(G)<\frac{3|G|}{2}$ in this case.
If $n$ is odd,  then the conclusion follows from Lemma \ref{GLQ} and Remark \ref{Remark} immediately.
Suppose $n$ is even.  By Lemma \ref{knowngroup}, we take a subgroup $K$ of $G$ of index $2$,
which is definitely normal in $G$. If $K\not\cong C_3\times C_3$, recalling $4\nmid |G|$ and so $2\nmid |K|$, then it follows from Lemma \ref{GLQ} that $\mathsf E(K)\leq \frac{3|K|-3}{2}< \frac{3|K|}{2}-1$, and it
follows from Lemma \ref{induction} that $\mathsf E(G)\leq \frac{3|G|}{2}-1$, done. Hence, we suppose $K\cong C_3\times C_3$. If $G$ is abelian, then $G\cong C_3\times C_6$ and the conclusion follows from Remark \ref{Remark}. Hence, we suppose $G$ is non-abelian. Since $|G|=2|K|=18$ and $G$ has no cyclic subgroup of index $2$, by the classification of groups of order $18$, we have that $G$ is isomorphic to either $S_3\times C_3$ or $(C_3\times C_3)\rtimes_{-1} C_2$. Then the conclusion follows from Lemma \ref{S3C3} readily. This completes the proof of Theorem \ref{maintheorem}. \qed  \\

\bigskip

\noindent {\bf Acknowledgements}

\noindent
This work is supported by NSFC (grant no. 12371335, 12271520).

\end{document}